\documentclass[a4paper,reqno,oneside,12pt]{amsart}
\usepackage{geometry}
\usepackage[utf8]{inputenc}
\usepackage{amssymb,amsfonts,amsmath,amsthm,eucal,url,cite}
\usepackage{enumitem}
\usepackage{tikz-cd}
\usepackage{hyperref}
\usepackage{multicol}
\usepackage{slashed}

\newcommand{\RR}{\mathbb{R}}
\newcommand{\CC}{\mathbb{C}}
\newcommand{\QQ}{\mathbb{Q}}

\newcommand{\ZZ}{\mathbb{Z}}

\newcommand{\F}{\mathcal{F}}

\newtheorem{theorem}{Theorem}[section]

\newtheorem{lemma}[theorem]{Lemma}

\newtheorem{conjecture}[theorem]{Conjecture}

\theoremstyle{definition}

\theoremstyle{remark}

\parskip 3pt
%\numberwithin{equation}{section}
\begin{document}
\title{Flat degenerate metrics and Riemannian foliations}

\author [B. Flamencourt]{Brice Flamencourt}
\address{UMPA, CNRS, \'Ecole Normale Sup\'erieure de Lyon, France}
\email{brice.flamencourt@ens-lyon.fr}

\maketitle

\begin{abstract}
Bandyopadhyay, Dacorogna, Matveev and Troyanov conjectured that a closed manifold admitting a flat, non-negative definite metric of constant rank $m$ should be finitely covered by a fiber bundle over the $m$-torus. We give a counter-example to this statement and we discuss the link between this problem and the study of transversely flat Riemannian foliations.
\end{abstract}

\section{Introduction}

An arbitrary $(0,2)$-tensor $h$ on a manifold $M$ is said to have flat coordinates if around any point there exist coordinates for which $h$ is a constant matrix. We will say in this case that $h$ is a flat bilinear form. In \cite{BDMT}, Bandyopadhyay, Dacorogna, Matveev and Troyanov studied the conditions under which $h$ is flat, generalizing the approach of Riemann in his introductory lecture of 1861, where he solved the case of a symmetric, positive definite $(0,2)$-tensor (see the two references cited in \cite{BDMT}, i.e. the original papers of Riemann \cite{Riem61, Riem56}). The authors also opened some questions about the global structure of manifolds admitting flat (degenerate) metrics, stating in particular the following conjecture:
\begin{conjecture} \label{mainconj}
Suppose a closed manifold M has a flat (possibly degenerate) non-negative definite metric g of rank m. Then, it is finitely covered by a manifold which is diffeomorphic to a fiber bundle over an $m$-dimensional torus.
\end{conjecture}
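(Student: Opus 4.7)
The plan is to realize $M$ as the total space of a transversely flat Riemannian foliation and combine Molino's structure theorem with Bieberbach's theorem to extract a torus base. First I would produce the foliation: in flat coordinates for $g$ one may arrange $g=\sum_{i=1}^m \mathrm{d}x^i\otimes\mathrm{d}x^i$, so $\ker g$ is locally spanned by the commuting coordinate vector fields $\partial_{x^{m+1}},\ldots,\partial_{x^n}$, hence involutive. Frobenius yields a foliation $\mathcal{F}$ with $T\mathcal{F}=\ker g$, and the transverse coordinates $(x^1,\ldots,x^m)$ equip $\mathcal{F}$ with a transversely flat Riemannian structure whose transverse holonomy pseudogroup sits inside the Euclidean isometry group $\mathrm{Iso}(\RR^m)=\RR^m\rtimes O(m)$.

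Next I would apply Molino's structure theorem on the closed manifold $M$. This produces a Riemannian foliation $\bar{\mathcal{F}}\supseteq\mathcal{F}$ whose leaves are the closures of those of $\mathcal{F}$, together with a compact ``basic manifold'' $W$ carrying the transverse flat Riemannian structure, and a locally trivial fibration $\hat M\to W$ on the transverse orthonormal frame bundle. Being a compact flat Riemannian manifold, $W$ is a Bieberbach manifold, so some finite cover of it is a torus $T^k$ with $k=\dim W\le m$.

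The third step is to descend this picture to $M$. In the favourable case where every leaf of $\mathcal{F}$ is closed one has $\bar{\mathcal{F}}=\mathcal{F}$ and $k=m$, and the projection $M\to M/\mathcal{F}$ is a locally trivial submersion onto a compact flat Riemannian manifold. Pulling the Bieberbach cover $T^m\to M/\mathcal{F}$ back along this submersion yields a finite cover of $M$ which is a fiber bundle over $T^m$, exactly as the conjecture predicts.

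The main obstacle is the dense-leaf case. When leaves of $\mathcal{F}$ fail to be closed one has $\dim W=k<m$, and the construction above produces only a fiber bundle over $T^k$ whose fibers are the $(n-k)$-dimensional leaf closures. This does not immediately contradict the conjecture, which only asks for \emph{some} fiber bundle over $T^m$, possibly unrelated to $\mathcal{F}$; however each leaf closure $\bar L$ is itself fibered by the original $(n-m)$-dimensional leaves of $\mathcal{F}$ over an $(m-k)$-torus arising from the commuting sheaf of $\bar{\mathcal{F}}$, so two torus directions of total dimension $m$ are present, in a possibly twisted fashion across different closures. To upgrade this into a single fiber bundle projection $M\to T^m$ one would have to rigidify the $(m-k)$-torus directions coherently across leaf closures, and the hypotheses provide no mechanism for doing so. I expect the proof to break precisely at this amalgamation step, and this is presumably the gap that the counter-example announced in the abstract exploits.
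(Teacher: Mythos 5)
You cannot complete this proof, because the statement is false: the paper's purpose is precisely to refute this conjecture with an explicit counter-example, not to establish it. To your credit, you have correctly located the fatal case. The counter-example lives exactly in the dense-leaf situation you flag at the end: the paper takes $A\in\mathrm{GL}(\ZZ^4)$ to be the companion matrix of $X^4-2X^3-2X+1$, which has two complex eigenvalues of modulus one, spanning a plane $H$ on which $A$ preserves a positive definite form $q$, and two real eigenvalues $\alpha,1/\alpha\neq\pm1$, spanning a complementary plane $E$. On $\tilde M=\RR^4\times\RR$ with the degenerate metric $(q\oplus 0_E)+dt^2$, the group $\Gamma=\ZZ^4\rtimes\ZZ$ (the $\ZZ$-factor acting by $(x,t)\mapsto(Ax,t+1)$) acts freely, properly discontinuously, cocompactly and by isometries, so $M=\tilde M/\Gamma$ is a closed $5$-manifold carrying a flat non-negative metric of rank $3$. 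The isotropy foliation is the linear foliation of the $T^4$-fibers by $E$-planes, whose leaves are dense, so your Molino basic manifold $W$ has dimension $k<m$ and the amalgamation you describe is not merely unproved but impossible here.

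The paper then rules out \emph{any} finite cover fibering over $T^3$ by pure group theory, with no foliation input at all: every finite-index subgroup of $\Gamma$ is of the form $L\rtimes\ZZ$ with $L\leq\ZZ^4$ a sublattice; the homotopy exact sequence of a putative fibration $F\to\bar M\to T^3$ would give a surjection $L\rtimes\ZZ\to\ZZ^3$ with kernel $\pi_1(F)$, which must contain every commutator $n^{-1}k_0^{-1}nk_0$, i.e.\ every element $(A^m-I_4)n$ with $n\in L$; since $A^m$ has no eigenvalue equal to $1$, the lattice $(A^m-I_4)L$ has rank $4$, so the abelian quotient has rank at most $1$ and cannot be $\ZZ^3$. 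Your analysis of the closed-leaf case is essentially sound and consistent with known results on $(\mathrm{Isom}(\RR^q),\RR^q)$-transverse structures (e.g.\ Carri\`ere's theorem for flows cited in the introduction), but the correct conclusion from your own observation is that the conjecture should be doubted in the dense-leaf case, not that the proof merely needs one more rigidification step.
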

\noindent This conjectured was considered as a potential generalization of Bieberbach's famous theorem \cite{Bieb} about cocompact groups of isometries acting on $\RR^n$.

Our aim in this note is to provide a counter-example to this conjecture, by constructing a closed manifold with a flat metric of rank $3$, obtained as a suspension of the $4$-torus over $S^1$. This manifold is then covered by a fiber bundle over the $4$-torus, but it is not finitely covered by any fiber bundle over the $3$-torus.

Before proceeding with the construction of this example, we believe that it could be enlightening to establish some link between this conjecture and an intensively studied field of mathematics, the one of Riemannian foliations. The interested reader could find a very detailed introduction to this topic in the classical book of Molino \cite{Mol}.

Given a foliation $\F$ on a manifold $M^n$, we say that $\F$ is {\em transversely Riemannian} if there exists a fiber bundle metric on the normal bundle $T M / T \F \to $ of $\F$, which is projectable to the local quotient manifolds of the foliation. Now, we consider a manifold $M$ with a flat, non-negative definite (degenerate) metric $g$ of constant rank $m$. Around any point of $M$, there exist coordinates in which $g$ is a constant matrix, thus the isotropy cone $\mathcal C_g$ of $g$ does not depend on the point in these coordinates. In particular, $\mathcal C_g$ is an involutive distribution on $M$, which induces a foliation $\F$. In addition, the metric $g$ projects to a Riemannian metric on the local quotient manifolds of this foliation because $g$ is constant along the leaves of $\F$ (indeed, it is a constant matrix in the coordinates considered before). Consequently, $g$ defines a transverse Riemannian structure on $(M, \F)$ having the additional property that its induced transverse Levi-Civita connection is flat. It is obvious, conversely, that a flat transverse Riemannian structure on $(M, \F)$ induces a non-negative definite metric on $M$ which is flat in the sense of \cite{BDMT}.

Altogether, we can reformulate the conjecture~\ref{mainconj} in the following way: if $(M,\F)$ is a closed foliated manifold with a flat transverse Riemannian structure, then $M$ is finitely covered by a fiber bundle over the $q$-torus, where $q$ is the codimension of $\F$. However, such structures have already been studied by several authors. Indeed, they correspond to $(\mathrm{Isom}(\RR^q), \RR^q)$-transverse structures. A first interesting result in this direction is given by Yves Carrière in \cite[Theorem 4.2]{Car} (an english version can be found in \cite[Appendix A, Theorem 4.2]{Mol}), which states that in the case of an oriented one-dimensional foliation, $M$ is either diffeomorphic to $T^k \times P$ where $k > 1$ and $P$ is a flat manifold, or $M$ is a Seifert fibration. Nevertheless, this result does not contradict Conjecture~\ref{mainconj}. Other works on transverse $(G,T)$-structures have been carried out, studying more generally the transversely flat similarity structures (see \cite{Asu, Asu2} for example).

Our example, constructed in the next section, relies more on number theory in order to find a well-behaving diffeomorphism of the $4$-torus. To relate the construction to our previous discussion, we emphasize that this example is a compact manifold of dimension $5$ endowed with a foliation of codimension 2 admitting a transversely flat Riemannian structure.

\vspace{0.5cm}

{\bf Acknowledgments.} I would like to thank Vladimir S. Matveev for suggesting me to work on this problem. I also thank Abdelghani Zeghib for his helpful indications concerning references on transversely flat Riemannian foliations.

\section{Construction of the counter-example}

We define the matrix $A \in \mathrm{GL(\ZZ^4)}$ to be the companion matrix of the irreducible (over $\QQ$) polynomial $P(X) := X^4 - 2 X^3 - 2 X + 1$, i.e.
\begin{equation}
A := \left(\begin{matrix} 0 & 0 & 0 & -1 \\ 1 & 0 & 0 & 2 \\ 0& 1 & 0 & 0 \\ 0 & 0 & 1 & 2 \end{matrix} \right).
\end{equation}
In particular, $A$ is diagonalizable, and its eigenvalues are the roots of $P$. But one has
\begin{equation}
P(X) = (X^2 + (\sqrt{3} - 1) X + 1) (X^2 - (\sqrt{3} + 1) X + 1)
\end{equation}
and $(\sqrt{3} - 1)^2 - 4 < 0$ and $(\sqrt{3} + 1)^2 - 4 > 0$, so $P$ has two complex roots $\lambda$, $\bar\lambda$ of modulus $1$ and two real roots $\alpha$, $1/ \alpha$ different from $\pm 1$. Let $H$ be the plane of $\RR^4$ defined by $H := \ker (A^2 + (\sqrt{3} - 1) A + 1)$ and let $E := \ker(A^2 - (\sqrt{3} + 1) A + 1)$, so $\RR^4 = H \oplus E$.

We now consider the manifold $\tilde M := \RR^4 \times \RR$ and the group
\[
\Gamma := \ZZ^4 \rtimes \langle \RR^4 \times \RR \ni (x, t) \mapsto (Ax, t + 1) \rangle \simeq \ZZ^4 \rtimes \ZZ
\]
acting on $\tilde M$, where $\ZZ^4$ acts as the standard lattice on $\RR^4$. The restriction of $A$ to $H$ is diagonalizable in $\CC$ and has two distinct eigenvalues of modulus $1$, so it is an isometry for a positive definite quadratic form $q$ on $H$. We define the non-negative definite metric
\begin{equation}
\tilde g := (q \oplus 0_E) + dt^2
\end{equation}
on $\tilde M$, where we use the decomposition $\tilde M \simeq (H \oplus E) \times \RR$ and $t$ is the canonical coordinate of the last factor.

By construction, $\Gamma$ acts by isometries on $(\tilde M, \tilde g)$. Moreover, $\Gamma$ acts properly discontinuously, freely and cocompactly on $\tilde M$, so $M := \tilde M / \Gamma$ is a compact manifold and $\tilde g$ descends to a non-negative definite metric $g$ on $M$. This metric is flat because $\tilde g$ is flat and it has rank $3$.

Our goal is to prove that $M$ is not finitely covered by a fiber bundle over the $3$-torus $T^3$.

\begin{lemma} \label{subgroup}
A subgroup of $\Gamma$ is a semi-direct product of the form $L \rtimes K$ where $L$ is a subgroup of $\ZZ^4$ and $K$ is either the trivial group or the group generated by an element of $\Gamma$ which does not lie in $\ZZ^4$.
\end{lemma}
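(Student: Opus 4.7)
The plan is to exploit the short exact sequence
\[
1 \to \ZZ^4 \to \Gamma \xrightarrow{\pi} \ZZ \to 1
\]
coming from the semi-direct product structure of $\Gamma$, where $\pi$ collapses the $\ZZ^4$-factor. Given a subgroup $H \leq \Gamma$, I would set $L := H \cap \ZZ^4 = \ker(\pi|_H)$, which is automatically a subgroup of $\ZZ^4$. The image $\pi(H)$ is a subgroup of $\ZZ$, so it is either trivial or of the form $n\ZZ$ for some integer $n \geq 1$.

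In the trivial case, $H = L$ and we may take $K = \{e\}$. Otherwise, I would pick any $\gamma \in H$ with $\pi(\gamma) = n$; such a $\gamma$ does not lie in $\ZZ^4$ since $\pi(\gamma) \neq 0$. The claim is then that $H = L \rtimes \langle \gamma \rangle$, and this requires three verifications. First, for any $h \in H$ one has $\pi(h) = kn$ for some $k \in \ZZ$, so $h \gamma^{-k} \in \ker(\pi|_H) = L$, hence $H = L \cdot \langle \gamma \rangle$. Second, $L \cap \langle \gamma \rangle = \{e\}$ because $\pi(\gamma^k) = kn$ vanishes only for $k = 0$. Third, $L$ is normal in $H$: writing $\gamma = (v, n)$ in the notation where $(w, k) \in \ZZ^4 \rtimes \ZZ$ acts on $\tilde M$ by $(x,t) \mapsto (A^k x + w, t + k)$, a short computation gives $\gamma \cdot (w, 0) \cdot \gamma^{-1} = (A^n w, 0)$. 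Since $A \in \mathrm{GL}(\ZZ^4)$ this element lies in $\ZZ^4$, and it clearly lies in $H$, hence in $L$.

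There is no serious obstacle in this proof, as it reduces to standard bookkeeping about a short exact sequence with cyclic quotient. The only mildly subtle point is the normality of $L$, which uses specifically the fact that the twist $A$ defining the semi-direct product stabilizes the normal subgroup $\ZZ^4$ (and not merely $\RR^4$); this is precisely the hypothesis $A \in \mathrm{GL}(\ZZ^4)$ that was imposed at the construction step.
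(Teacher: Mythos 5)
Your proof is correct and follows essentially the same route as the paper: the paper's auxiliary set $\mathcal E$ is exactly the image of the subgroup under the projection $\Gamma \to \ZZ$, its generator $k_0$ plays the role of your $n\ZZ$, and the element $n_0 k_0$ is your chosen $\gamma$. Your write-up is slightly more explicit in checking normality of $L = H \cap \ZZ^4$ and the triviality of $L \cap \langle\gamma\rangle$, which the paper leaves implicit, but there is no substantive difference in method.
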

\begin{proof}
Let $\Omega$ be a subgroup of $\Gamma$. One has that $\Omega \cap \ZZ^4$ is a subgroup of $\ZZ^4$. Now, any element of $\Omega$ which does not lie in $\ZZ^4$ is of the form $n k$ where $n \in \ZZ^4$ and $k \in \{0\} \rtimes \ZZ \subset \Gamma$ is a non-trivial element. We consider
\begin{equation}
\mathcal E := \{k \in  \{0\} \rtimes \ZZ \subset \Gamma \ \vert \ \exists n \in \ZZ^4, \ n k  \in \Omega \}.
\end{equation}
It is obvious that $\mathcal E$ is a subgroup of $\ZZ$ in the decomposition $\Gamma = \ZZ^4 \rtimes \ZZ$, thus there exists $k_0 \in \mathcal E$ such that $\mathcal E = \langle k_0 \rangle$. Consequently, one can find $n_0 \in \ZZ^4$ such that $n_0 k_0 \in \Omega$ and any element of $\Omega$ can be written uniquely as $n (n_0 k_0)^m$ for elements $n \in \ZZ^4$ and $m \in \ZZ$.
\end{proof}

From Lemma~\ref{subgroup}, we deduce that any subgroup $\Omega$ of $\Gamma$ with finite index is of the form $L \rtimes K$ with $L$ a sublattice of $\ZZ^4$ and $K \simeq \ZZ$ is generated by a non-trivial element not lying in $\ZZ^4$. In particular, $\Omega \simeq \ZZ^4 \rtimes \ZZ$.

We assume by contradiction that $M$ is finitely covered by a fiber bundle over $T^3$ and we denote by $\bar M$ this finite cover. Since $\pi_1(\bar M)$ is a subgroup of $\Gamma$ with finite index, one has $\pi_1(\bar M) \simeq L \rtimes K$ with $K \simeq \ZZ$ using the notations of the previous discussion. We write the long sequence of homotopy groups of $F \to \bar M \to T^3$ (where $F$ is the typical fiber):
\begin{equation}
\pi_2(T^3) \to \pi_1(F) \to \pi_1(\bar M) \to \pi_1(T^3) \to \pi_0(F),
\end{equation}
which becomes a short exact sequence:
\begin{equation}
0 \to \pi_1(F) \to L \rtimes K \to \ZZ^3 \to 0.
\end{equation}
This means that $\pi_1(F)$ is a normal subgroup of $L \rtimes K$. Since $(L \rtimes K) / \pi_1(F) \simeq \ZZ^3$ is abelian, $\pi_1(F)$ should contain the commutator subgroup of $\ZZ^4 \rtimes \ZZ$. If we denote by $k_0$ a generator of the group $K$, the commutator subgroup contains all the elements of the form $n^{-1} k_0^{-1} n k_0$ for $n \in L$. But $k_0$ is a map of the form $\RR^4 \times \RR \ni (x, t) \mapsto (A^m x + \tau, t + m)$ for some $m \in \ZZ \setminus \{0\}$ and $\tau \in \ZZ^4$. We deduce that $n^{-1} k_0^{-1} n k_0 (x, t )= (x + (A^m n) - n, t) = (x + (A^m - I_4) n, t)$. By construction, $A^m$ has all its eigenvalues different from $1$, so $A^m - I_4$ is non-singular, implying that $(A^m - I_4) L$ is a sublattice of $\ZZ^4$ (in particular, it is an abelian group of rank $4$). The rank of the abelian group $(\ZZ^4 \rtimes \ZZ) / \pi_1(F) \simeq \ZZ^3$ should thus be less than $1$, which is a contradiction.

\end{document}